\theoremstyle{plain}
\newtheorem{theorem}{Theorem}
\newtheorem{lemma}{Lemma}
\theoremstyle{definition}
\newtheorem{remark}{Remark}
\newtheorem{conjecture}{Conjecture}
\begin{document}

\title[nodal curves]
{Nodal curves and components of the Hilbert scheme of curves in $\mathbb {P}^r$ with the expected number of moduli}
\author{E. Ballico}
\address{Dept. of Mathematics\\
 University of Trento\\
38123 Povo (TN), Italy}
\email{ballico@science.unitn.it}
\thanks{The author was partially supported by MIUR and GNSAGA of INdAM (Italy).}
\subjclass{14H50; 14N05}
\keywords{Hilbert scheme of curves; nodal projective curve; expected number of moduli}

\begin{abstract}
We study the existence of components with the expected number of moduli of the
Hilbert scheme of integral nodal curves $C \subset \mathbb {P}^r$ with prescribed degree, arithmetic genus and number
of singular points.
\end{abstract}

\maketitle

\section{Introduction}\label{S1}

For all integers $g, r, d$ set $\rho (g,r,d):= (r+1)d -rg -r(r+1)$.  We work in the range
$\rho (g,r,d) <0$. Several authors studied the existence of irreducible components of the Hilbert scheme of smooth curves of $\mathbb {P}^r$ with the expected number of moduli (\cite{s1},\cite{eh1},\cite{eh2},\cite{be4},\cite{be},\cite{l1},\cite{p},\cite{l2},\cite{e},\cite{st}). In the set-up of irreducible components, $\Gamma$, of $\mbox{Hilb}(\mathbb {P}^r)$ with the expected dimension and the expected number of moduli in the range $\rho (g,r,d)\le 0$ it also says that for a general $C\in \Gamma$
the $g^r_d$ coming from the inclusion $C\subset \mathbb {P}^r$ is an isolated point of the set of all $g^r_d$'s on $C$ (see \cite{p}, Definition 1.1.2); it does not say that $C$ has only finitely many $g^r_d$'s. The result announced in \cite{eh1}, part 6) at page 338, is stronger for the following reason: it asks about the existence of an irreducible variety $T\subset \mathcal {M}_g$ over which the relative $\mathcal {G}^r_d$ is generically finite, but
$T\nsubseteq V$ for any integral $V\subseteq \mathcal {M}_g$ with $\dim (V) > \dim (T)$ and a general $C\in V$ has a $g^r_d$. As far as we know no proof of \cite{eh1}, part 6) at page 338, was published. Our tools give no informations on these two more general (and very interesting) problems.

For any nodal and connected curve $A\subset \mathbb {P}^r$ let $N_A$ denote its normal bundle in $\mathbb {P}^r$. If $h^1(A,N_A)=0$, then $\mbox{Hilb}(\mathbb {P}^r)$ is smooth
at $A$ and hence $\mbox{Hilb}(\mathbb {P}^r)$ has a unique irreducible component, $\Gamma$, and $\dim (\Gamma ) =h^0(A,N_A) = (r+1)\cdot \deg (A)-(r-3)(p_a(A)-1)$. In this case we say that
$\Gamma$ has the expected dimension. For all integers $r, d, g, t$ such that $0 \le t \le g$ let $E(r,d,g,t)$ denote the subset of the Hilbert scheme of $\mathbb {P}^r$ parametrizing integral
and non-degenerate curves $C\subset \mathbb {P}^r$ such that $\deg (C)=d$, $p_a(C) =g$ and $C$ has exactly $t$ ordinary nodes
as its only singularities. Set $E'(r,d,g,t):= \{C\in E(r,d,g,t): h^1(C,\mathcal {O}_C(2))=0\}$. We will always take the reduced structure as the scheme structure
for $E(r,d,g,t)$ and its open subset $E'(r,d,g,t)$. Let $\overline{E}'(r,d,g,t)$ denote the closure of $E'(r,d,g,t)$ in the Hilbert scheme of $\mathbb {P}^r$. Let $\overline{\mathcal {M}}_g\{t\}$ denote the set of all integral $Y\in \overline{\mathcal {M}}_g$
with exactly $t$ nodes and $\overline{\mathcal {M}}_g[t]$ its closure in $\overline{\mathcal {M}}_g$. Notice
that $\overline{\mathcal {M}}_g\{t\}$ is integral and $\dim( \overline{\mathcal {M}}_g\{t\} ) =3g-3-t$. For instance, $\overline{\mathcal {M}}_g\{g\}$ parametrizes the set of all rational curves with exactly $g$ nodes.
Notice that we always have a morphism $u_{r,d,g,t}: E(r,d,g,t) \to \overline{\mathcal {M}}_g\{t\}$. Let $\Gamma$ be any irreducible component of $E'(r,d,g,t)$. We say
that $\Gamma$ has the expected number of moduli if $\dim (u_{r,d,g,t}(\Gamma )) = \min \{3g-3-t,3g-3 +\rho (d,g,r)-t\}$.

\begin{conjecture}\label{i1}
Fix an integer $r \ge 3$. There is a function $\mathbb {O}_r: \mathbb {N} \to \mathbb {N}$ such that $\lim _{g\to +\infty} \mathbb {O}_r(g)/g = 0$ and the following property holds:
there is an integer $g_0$ such that for all integers $g, t, d$ with $g\ge g_0$, $0 \le t \le g$, $-\rho (g,r,d) +t \le 2g -\mathbb {O}_r(g)$ there is an irreducible
component $\Gamma$ of $E'(r,d,g,t)$ with the expected dimension and the expected number of moduli.
\end{conjecture}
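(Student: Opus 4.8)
The plan is to reduce the two "expected" properties to cohomological vanishings at a single, carefully chosen curve, and then to realize such a curve as a general projection of a high-codimension smooth curve with prescribed secant behaviour. First I would record what the conditions mean infinitesimally. Writing $N_C$ for the normal sheaf and $N'_C\subseteq N_C$ for the equisingular normal sheaf (the kernel of $N_C\to T^1_C$, where $T^1_C$ is the length-$t$ skyscraper at the nodes), the sequence
\[
0 \to N'_C \to N_C \to T^1_C \to 0
\]
shows that $h^1(C,N'_C)=0$ forces both $h^1(C,N_C)=0$ and surjectivity of $H^0(N_C)\to H^0(T^1_C)$; hence it is exactly the condition that $\mathrm{Hilb}(\mathbb P^r)$ be smooth at $C$ and that the $t$ nodes impose independent conditions, i.e.\ that the component of $E'(r,d,g,t)$ through $C$ have dimension $(r+1)d-(r-3)(g-1)-t$. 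Since, once the component has this expected dimension, one has $\dim u_{r,d,g,t}(\Gamma)=\dim\Gamma-\dim\mathrm{PGL}(r+1)$ with $\dim\mathrm{PGL}(r+1)=(r+1)^2-1$, and an easy computation shows this equals the expected value $3g-3+\rho(g,r,d)-t$ exactly when the general fibre of $u_{r,d,g,t}$ has the minimal dimension $\dim\mathrm{PGL}(r+1)$ (equivalently, when the inducing $g^r_d$ is isolated), the whole statement follows from producing one curve $C$ at which $h^1(N'_C)=0$, $h^1(\mathcal O_C(2))=0$, and the $g^r_d$ is rigid modulo projectivities; all three are open conditions, so one such curve propagates to a component.

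For the construction I would fix the geometric genus $\gamma:=g-t$ and build $C$ by projection. Start from a smooth nondegenerate curve $Y\subset\mathbb P^{r+s}$ of genus $\gamma$ and degree $d$, chosen so that $h^1(Y,N_Y)=0$ and the linear series is as general as the range permits; the existence of such a building block (a maximal-rank/Brill--Noether input) is available only after paying a sublinear cost in the genus, and this is the origin of the error term $\mathbb O_r(g)$. Choose $t$ pairs of points $\{p_i,q_i\}$ on $Y$ and project to $\mathbb P^r$ from a linear centre $\Lambda\cong\mathbb P^{s-1}$ required to meet each of the $t$ secant lines $\overline{p_iq_i}$ and nothing else; since meeting a fixed line is a Schubert condition of codimension $r$ on $\Lambda$, a general such $\Lambda$ produces an integral curve $C\subset\mathbb P^r$ with exactly $t$ nodes whose normalization is $Y$. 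The normal bundle $N_C$ is then governed by $N_Y$ together with elementary modifications at the $2t$ preimages of the nodes, so $h^1(N'_C)=0$ should follow from $h^1(N_Y)=0$ by a Mayer--Vietoris/Horace argument across the nodes together with semicontinuity, while $h^1(\mathcal O_C(2))=0$ would follow from the corresponding maximal-rank statement for the quadrics restricted to $C$.

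For the expected number of moduli I would show that $u_{r,d,g,t}(\Gamma)$ has codimension exactly $-\rho(g,r,d)$ in $\overline{\mathcal M}_g\{t\}$. Here the book-keeping $-\rho(g,r,d)=rt-\rho(\gamma,r,d)$ expresses the codimension as $rt$, the $t$ secant conditions each of codimension $r$, corrected by the dimension $\rho(\gamma,r,d)$ of linear series available on $Y$; proving that these conditions are independent for general $(Y,\{p_i,q_i\})$ is a transversality statement that I would reduce to the injectivity, in the expected range, of a Petri-type multiplication map attached to the projected linear series. This is what pins the fibre of $u_{r,d,g,t}$ down to $\mathrm{PGL}(r+1)$ and hence supplies the isolated-$g^r_d$ property required above.

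The main obstacle is to make all of this uniform in $(g,t,d)$. Both the normal-bundle vanishing $h^1(N'_C)=0$ and the independence of the secant conditions degrade as $-\rho(g,r,d)+t$ approaches $2g$: near that boundary the linear series becomes too special for the maximal-rank input to survive the $2t$ modifications, and the Petri map loses the injectivity that forces the $g^r_d$ to be isolated. Closing the argument --- most naturally by an induction that adds one node at a time, passing from $(g,t)$ to $(g+1,t+1)$ at fixed $d$ and $\gamma$ --- requires absorbing the accumulated defect into a sublinear function, which is exactly why the clean hypothesis is $-\rho(g,r,d)+t\le 2g-\mathbb O_r(g)$ rather than $-\rho(g,r,d)+t\le 2g$.
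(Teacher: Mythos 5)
The first thing to say is that the statement you were asked to prove is not a theorem of the paper at all: it is Conjecture \ref{i1}, explicitly left open. The paper proves only the much weaker Theorem \ref{i3}, in which the number of nodes is bounded by a fixed constant $t$ (rather than being allowed to grow up to $g$) and the Brill--Noether defect is bounded by $2rg/(r+3)-\psi_{r,t}(g)$ rather than by $2g-\mathbb{O}_r(g)$; and it does so by a quite different construction (attaching $x$ rational normal curves through $r+2$ general points each to a smooth curve supplied by Lopez's theorem, then smoothing all but $x$ of the resulting nodes via Sernesi's smoothing criterion). So there is no proof in the paper to compare yours against; the question is whether your argument settles an open conjecture, and it does not.

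Your first paragraph is sound: the reduction to $h^1(N'_C)=0$, $h^1(\mathcal{O}_C(2))=0$ and isolatedness of the $g^r_d$ (in the range $\rho\le 0$), the dimension count $(r+1)d-(r-3)(g-1)-t-((r+1)^2-1)=3g-3+\rho(g,r,d)-t$, and the bookkeeping $-\rho(g,r,d)=rt-\rho(\gamma,r,d)$ are all correct. But everything after that is a plan, not a proof, and the plan defers exactly the hard points. Three essential steps are asserted without argument: (a) the existence of the building block $Y\subset\mathbb{P}^{r+s}$ of degree $d$ and genus $\gamma$ with $h^1(N_Y)=0$, uniformly over the stated range of $(g,t,d)$ --- you never even fix $s$ or verify that such $Y$ exists where you need it; (b) the vanishing $h^1(N'_C)=0$ after projection, which you say ``should follow'' by a Mayer--Vietoris/Horace argument --- but the elementary modifications of $N_Y$ at $2t$ points, with $t$ as large as $g$, can and do create $h^1$, and controlling this loss is precisely where every known technique stops far short of the $2g$ bound (the paper's own method, for comparison, only survives because the nodes it keeps sit on attached rational normal curves with split restricted tangent bundle); (c) the independence of the $t$ codimension-$r$ secant conditions, which you reduce to injectivity of a Petri-type multiplication map --- but in the regime $\rho(g,r,d)<0$ such injectivity statements for the relevant special linear series are exactly the content of the unpublished Eisenbud--Harris claim quoted in the paper's introduction, not something that can be cited or assumed. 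Finally, your last paragraph concedes that the uniformity near the boundary $-\rho+t\approx 2g$ is ``the main obstacle'' and offers only the hope of absorbing the accumulated defect into a sublinear error term; that is a restatement of the conjecture's difficulty, not an argument. As written, your construction would at best yield a statement in a much smaller range, comparable to or weaker than what Theorem \ref{i3} already provides.
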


\begin{conjecture}\label{i2}
Fix integers $r\ge 3$ and $t\ge 0$. There is a function $\mathbb {O}_{r,t}: \mathbb {N} \to \mathbb {N}$ such that $\lim _{g\to +\infty} \mathbb {O}_{r,t}(g)/g = 0$ and the following property holds:
there is an integer $g_0 \ge t$ such that for all integers $g, x, d$ with $g\ge g_0$, $0\le x \le t$, $-\rho (g,r,d) +x \le 3g -\mathbb {O}_{r,t}(g)$ there is an irreducible
component $\Gamma$ of $E'(r,d,g,x)$ with the expected dimension and the expected number of moduli.
\end{conjecture}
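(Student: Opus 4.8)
The plan is to prove Conjecture~\ref{i2} by reducing it to the case of smooth curves and then creating the prescribed nodes one at a time; since $t$ is fixed, only a bounded number ($\le t$) of such steps is ever needed, so the bounded numerical losses they produce can be absorbed into the defect function $\mathbb{O}_{r,t}$. First I would record what the two requirements mean infinitesimally. For a nodal $C\in E'(r,d,g,x)$ let $N'_C\subset N_C$ be the equisingular normal sheaf, sitting in $0\to N'_C\to N_C\to \bigoplus_{\mathrm{nodes}}T^1\to 0$ with each $T^1$ one-dimensional; then a component $\Gamma\ni C$ of $E'(r,d,g,x)$ has the expected dimension exactly when $h^1(C,N'_C)=0$, in which case $\dim\Gamma = h^0(N'_C)=(r+1)d-(r-3)(g-1)-x$. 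A direct computation shows that the expected number of moduli, $\dim u_{r,d,g,x}(\Gamma)=3g-3+\rho(g,r,d)-x$, is equivalent to the general fibre of $u_{r,d,g,x}$ having dimension exactly $\dim\mathrm{PGL}(r+1)=r(r+2)$, i.e.\ to the general curve of $u_{r,d,g,x}(\Gamma)$ carrying only finitely many of the relevant $g^r_d$'s. Thus I must exhibit, for each admissible $(g,x,d)$, an integral nodal $C\subset\mathbb{P}^r$ of degree $d$, $p_a=g$, with exactly $x$ nodes, with $h^1(\mathcal{O}_C(2))=0$, with $h^1(N'_C)=0$, and whose moduli-image has the expected dimension.

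The key simplification is to pass to the normalization $\nu\colon \tilde C\to C$, a smooth curve of genus $\gamma:=g-x$, and to the composite immersion $\tilde\phi=\phi\circ\nu\colon\tilde C\to\mathbb{P}^r$, where $\phi$ is the inclusion of $C$. Writing $a_i,b_i$ for the two preimages of the $i$-th node, one has $\nu^*N_C\cong N_{\tilde\phi}\bigl(\sum_i(a_i+b_i)\bigr)$, and the cohomology of $N'_C$ and of $N_C$ is governed by that of $N_{\tilde\phi}$ and of $N_{\tilde\phi}(\sum_i(a_i+b_i))$ on the smooth curve $\tilde C$; the twist by a positive divisor only helps the vanishing, so a nodal model is never worse than the underlying immersion for these $H^1$'s. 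Moreover the same $g^r_d$ (the degree-$d$, dimension-$r$ linear series $V\subseteq H^0(\tilde C,\nu^*\mathcal{O}_C(1))$ whose base points and double points are prescribed) carries all the Brill--Noether content, so finiteness of the $g^r_d$'s on $C$ is a Brill--Noether statement on $\tilde C$. Since $\rho(\gamma,r,d)=\rho(g,r,d)+rx$ and $3\gamma=3g-3x$, the inequality $-\rho(g,r,d)+x\le 3g-\mathbb{O}_{r,t}(g)$ translates, up to the bounded term $(r-2)x\le (r-2)t$, into the smooth-curve inequality $-\rho(\gamma,r,d)\le 3\gamma-\mathbb{O}'(\gamma)$.

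Everything therefore reduces to two steps. \emph{Base case} ($x=0$): produce, for $\gamma\gg 0$ and $-\rho(\gamma,r,d)\le 3\gamma-\mathbb{O}_{r,0}(\gamma)$, a smooth non-degenerate $\tilde C\subset\mathbb{P}^r$ with $h^1(N_{\tilde C})=0$, $h^1(\mathcal{O}_{\tilde C}(2))=0$, whose general deformation fills a Brill--Noether locus of curves with a $g^r_d$ of expected dimension $3\gamma-3+\rho$ and carries only finitely many such $g^r_d$'s. I would build $\tilde C$ by specializing to a connected reducible/nodal curve assembled from low-degree pieces (rational normal curves, conics, elliptic tails chosen to tune $\gamma$ and $d$) for which $N$ can be computed by the standard gluing/exact-sequence technique, verify $h^1(N)=0$ there, and smooth; semicontinuity of $h^1(N)$ and of the generic finiteness of $g^r_d$ then propagate to the general member. \emph{Node-creation step}: starting from a component for $(\gamma,d)$ carrying $x-1$ nodes, acquire one further node by choosing a general pair of points of $\tilde C$ and deforming the map so their images collide transversally; $h^1(N'_C)=0$ is inherited from the $(x-1)$-case through the exact sequence above and the positivity of the twisting divisor, the new node raising $p_a$ and the node-count by $1$ while leaving $\gamma$ and $d$ fixed and contributing exactly two moduli parameters, so that the image dimension stays expected. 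Each step tightens the constraint by $r+1$; over at most $t$ steps this is bounded, so setting $\mathbb{O}_{r,t}(g):=\mathbb{O}_{r,0}(g)+(r+1)t$ keeps $\mathbb{O}_{r,t}(g)/g\to 0$.

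The main obstacle is the base case at the near-maximal range, namely establishing for smooth curves both $h^1(N_{\tilde C})=0$ and the rigidity (finiteness) of the $g^r_d$ on the general member simultaneously when $-\rho$ is allowed to approach $3\gamma$, so that $\rho$ is very negative. Producing a degenerate seed curve whose normal bundle is genuinely balanced enough to force $h^1(N)=0$ throughout this range, while keeping the smoothing integral, non-degenerate, with exactly the prescribed singularities and with $h^1(\mathcal{O}(2))=0$, is where the real work lies; the node-adding induction and the passage to the normalization are by comparison formal. I expect $\mathbb{O}_{r,0}$, and hence $\mathbb{O}_{r,t}$, to be precisely the measure of how far the available seed constructions fall short of the optimal $3\gamma$ threshold.
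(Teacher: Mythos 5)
The statement you set out to prove is Conjecture~\ref{i2} of the paper, and the first thing to say is that the paper contains no proof of it: it is stated as an open conjecture, and the paper explicitly records that even its $t=0$ case (the question raised in \cite{l2} after the statement of Theorem 1.2 there) is known only for $r=3$, by Pareschi \cite{p}. What the paper actually proves, Theorem~\ref{i3}, is a strictly weaker statement in which the threshold $3g-\mathbb{O}_{r,t}(g)$ is replaced by $2rg/(r+3)-\psi_{r,t}(g)$; its proof takes Lopez's Theorem 1.2 of \cite{l2} as a black box for the smooth case and then manufactures the $x$ nodes by attaching $x$ rational normal curves to the smooth curve through $r+2$ general points each and smoothing all but one node per attached curve, using \cite{s2}, Theorem 6.3. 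So there is no proof in the paper to compare yours against; the only question is whether your argument closes the conjecture, and it does not.

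The genuine gap is your base case. Your reduction (normalize, compare $N'_C$ with $N_{\tilde\phi}$, absorb the bounded loss $(r-2)x\le (r-2)t$ into $\mathbb{O}_{r,t}$) is reasonable bookkeeping, but it reduces Conjecture~\ref{i2} to precisely its own $x=0$ case: producing, for all $d$ with $-\rho(\gamma,r,d)\le 3\gamma-o(\gamma)$, smooth non-degenerate curves in $\mathbb{P}^r$ with $h^1(N)=0$ lying on a component with the expected number of moduli. For $r\ge 4$ that is the open problem; the best published range is Lopez's $2rg/(r+3)$, which is exactly why the paper's Theorem~\ref{i3} stops there. The sketch you give for this base case --- assemble low-degree pieces, check $h^1(N)=0$, smooth, invoke semicontinuity --- is the strategy of \cite{s1}, \cite{l1}, \cite{l2}, and its known implementations stall at $2rg/(r+3)$; closing the gap from $2rg/(r+3)$ to $3g$ is the entire content of the conjecture, and you defer it, as you yourself admit, to ``where the real work lies.'' A secondary but real problem is your node-creation step: as phrased, it deforms an embedded smooth curve so that two of its points collide. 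This cannot happen inside the Hilbert scheme, since arithmetic genus is constant in flat families, so the nodal curve lives in a different Hilbert scheme altogether; the step must be recast as a deformation of the map $\tilde\phi$, and one must then prove that the image is integral, non-degenerate, has exactly one new node, and --- the point on which the paper's proof of its weaker Theorem~\ref{i3} spends most of its effort --- that the embedding series of the nodal curve is an isolated $g^r_d$, so that generic finiteness of $u_{r,d,g,x}$ survives. Neither of these is delivered by ``the positivity of the twisting divisor.''
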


The case $t=0$ of Conjecture \ref{i2} is the question raised in \cite{l2}, after the statement of Theorem 1.2. This case was proved for $r=3$ and $t=0$ in \cite{p}.

In this paper we prove the following result.

\begin{theorem}\label{i3}
Fix integers $r\ge 4$ and $t\ge 0$. There is a function $\psi _{r,t}: \mathbb {N} \to \mathbb {N}$ such that $\lim _{g\to +\infty} \psi_{r,t}(g)/g = 0$ and the following property holds:
there is an integer $g_0 \ge t$ such that for all integers $g, x, d$ with $g\ge g_0$, $0\le x \le t$, $-\rho (g,r,d)\le 2rg/(r+3) -\psi_{r,t}(g)$ there is an irreducible
component $\Gamma$ of $E(r,d,g,x)$ with the expected number of moduli. If $r\ge 7$, then $\Gamma$ is a component of $E'(r,d,g,x)$.
\end{theorem}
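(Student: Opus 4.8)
The plan is to reduce the theorem to the construction of a single well-behaved curve together with two cohomological vanishings, and then to read off the number of moduli from the fibre dimension of $u_{r,d,g,x}$. First I would record that, since $\rho := \rho(g,r,d)<0$, the expected number of moduli is $\min\{3g-3-x,\,3g-3+\rho-x\}=3g-3+\rho-x$. Suppose I can produce $C_0\in E(r,d,g,x)$ with $h^1(C_0,N_{C_0})=0$. Then $\mathrm{Hilb}(\mathbb{P}^r)$ is smooth at $C_0$, there is a unique component $\Gamma\ni C_0$, and $\dim\Gamma=h^0(C_0,N_{C_0})=(r+1)d-(r-3)(g-1)$. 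Over a general point of its image, $u_{r,d,g,x}|_\Gamma$ has fibre of dimension $\dim G^r_d(C)+\dim\mathrm{PGL}(r+1)-\dim\mathrm{Aut}(C)$; a general $C$ (normalisation of genus $g-x\ge g_0-t$) has finite automorphism group, so generic fibre dimension gives $\dim u_{r,d,g,x}(\Gamma)=(r+1)d-(r-3)(g-1)-\dim G^r_d(C)-(r^2+2r)$. A one-line computation shows this equals $3g-3+\rho-x$ exactly when $\dim G^r_d(C)=x$. Hence it suffices to exhibit $C_0$ with: (i) $h^1(N_{C_0})=0$; (ii) exactly $x$ ordinary nodes and birational normalisation $\widetilde C_0$ of genus $g-x$; (iii) an $x$-dimensional family of $g^r_d$'s inducing the embedding; and, when $r\ge 7$, (iv) $h^1(\mathcal{O}_{C_0}(2))=0$.

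Next I would note that (iii) is essentially automatic from the nodal structure. A line bundle on $C_0$ is a line bundle on $\widetilde C_0$ together with $x$ gluing parameters, so the $x$-dimensional torus in $\mathrm{Pic}(C_0)$ produces an $x$-dimensional family of $g^r_d$'s on $C_0$ as soon as the $g^r_d$ it induces on $\widetilde C_0$ is rigid, i.e.\ $\dim G^r_d(\widetilde C_0)=0$. Since $\rho(g-x,r,d)=\rho+rx$ is very negative in our range ($x\le t$ fixed while $-\rho$ is of linear size in $g$), $\widetilde C_0$ must lie in a Brill--Noether locus of $\mathcal{M}_{g-x}$ carrying a rigid $g^r_d$; I would take $\widetilde C_0$ general in a component of this locus of the expected dimension $3(g-x)-3+\rho+rx$, which is the point where I expect to invoke the smooth-curve existence results quoted in the introduction (or to reprove the single instance needed by the same degeneration used below).

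The technical heart is the construction of $C_0$ realising (i), (ii) and (iv). I would build $C_0$ by a flat degeneration to a very reducible nodal curve — a low-genus (or rational) backbone to which rational curves are attached, produced inductively in $d$ and $g$ — placing the $x$ nodes on general secant pairs, and then prove $h^1(N_{C_0})=0$ by the standard normal-bundle exact sequences: the sequence expressing $N_{C_0}$ in terms of the normal bundles of the components, together with the sequence comparing $N_{C_0}$ with the pushforward of the normal bundle of the normalisation twisted by the node divisor. Vanishing of $h^1$ is propagated along the degeneration by induction and semicontinuity, and the bound $-\rho\le 2rg/(r+3)-\psi_{r,t}(g)$ (equivalently $d\ge rg/(r+3)+r$) is precisely the range in which this inductive vanishing survives, with $\psi_{r,t}$ absorbing the finitely many low-genus and node-bookkeeping corrections arising from $x\le t$. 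For $r\ge 7$ I would then establish (iv), $h^1(\mathcal{O}_{C_0}(2))=0$, by a maximal-rank/postulation argument on the same degenerate curve; because $\deg\mathcal{O}_{C_0}(2)=2d<2g-2$ in our range, $\mathcal{O}_{C_0}(2)$ is a priori special, so this is a genuine condition that one can force only when $r$ is large enough, and it is exactly what places $\Gamma$ inside $E'(r,d,g,x)$.

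I expect the main obstacle to be step (i) in the full range: proving $h^1(N_{C_0})=0$ up to the sharp threshold $2rg/(r+3)$ while simultaneously guaranteeing that the image acquires exactly $x$ (and no further) ordinary nodes and that the smoothing stays among integral curves, so that $C_0\in E(r,d,g,x)$. In the narrow band between $d\approx rg/(r+3)$ and the wall $\rho=0$ the curves are special enough that the normal-bundle cohomology is delicate, and each inductive step that raises the degree (or adds a node) must be shown not to introduce new obstructions; this, rather than the moduli bookkeeping of the first paragraph, is where the real work lies. For $4\le r\le 6$ the construction and the dimension count are identical and still yield a component $\Gamma\subset E(r,d,g,x)$ with the expected number of moduli; what is no longer available is the vanishing $h^1(\mathcal{O}_{C_0}(2))=0$, which is why the $E'$ refinement is claimed only for $r\ge 7$.
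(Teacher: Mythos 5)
Your reduction in the first paragraph contains an error that propagates through the whole plan. You take $\Gamma$ to be the Hilbert scheme component through $C_0$ and use $\dim\Gamma=(r+1)d-(r-3)(g-1)$ as the dimension of the family of nodal curves on which $u_{r,d,g,x}$ is computed. But $u_{r,d,g,x}$ is only defined on $E(r,d,g,x)$, and for any $C_0$ produced by your own construction (a partial smoothing of a reducible curve, keeping $x$ nodes) the $x$ nodes can be smoothed independently, so the locus of curves with exactly $x$ nodes has codimension $x$ in $\Gamma$: a component of $E(r,d,g,x)$ through $C_0$ has dimension $(r+1)d-(r-3)(g-1)-x$. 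Redoing your ``one-line computation'' with the correct dimension, the criterion for the expected number of moduli $3g-3+\rho-x$ is $\dim G^r_d(C)=0$, i.e.\ the embedding series must be an \emph{isolated} $g^r_d$ --- exactly the opposite of your condition (iii). Indeed this is what the paper proves: after attaching to a curve $Y$ from Lopez's construction $x$ general rational normal curves $D_i$ through $r+2$ general points each, and partially smoothing $W=Y\cup D_1\cup\cdots\cup D_x$ via Sernesi's criterion (keeping one node per $D_i$, which yields the codimension-$x$ family $U_S$ of integral curves with exactly $x$ nodes), the key step is to show that $\mathcal{O}_W(1)$ is an isolated $g^r_{\deg(W)}$ on $W$, using isolatedness of $\mathcal{O}_Y(1)$ on $Y$ and the finiteness of rational normal curves through $A_i$ compatible with the marked points.

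Moreover, claim (iii) is not ``essentially automatic'': the gluing torus does \emph{not} stay inside $W^r_d(C_0)$. A section $s$ of $\nu^*L$ descends to the bundle with gluing data $(\lambda_i)$ iff $s(p_i)=\lambda_i s(q_i)$ at each node; if $h^0(\widetilde C_0,\nu^*\mathcal{O}_{C_0}(1))=r+1$ (which is forced by semicontinuity for partial smoothings of the type you and the paper construct), then any nontrivial change of a gluing parameter kills sections, since $\nu^*\mathcal{O}_{C_0}(1)$ is base-point free, so $\mathcal{O}_{C_0}(1)$ is isolated even within its torus fibre. For the whole torus to consist of $g^r_d$'s you would need $h^0(\nu^*\mathcal{O}_{C_0}(1))\ge r+1+x$, which contradicts your own rigidity requirement on $\widetilde C_0$: the series would then sit inside a $g^{r+x}_d$, producing a positive-dimensional family of $g^r_d$'s on $\widetilde C_0$. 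Finally, note that if (iii) did hold for your $C_0$, then combined with the correct codimension-$x$ dimension count the image of the moduli map would have dimension $3g-3+\rho-2x$, i.e.\ \emph{fewer} moduli than expected, so condition (iii) is not merely unproved but counterproductive. The $r\ge 7$ discussion is also off target: in the paper this threshold comes from applying the Mayer--Vietoris lemma to the canonically embedded curves with $r+6+\epsilon$ attaching points used inside Lopez's construction, not from a postulation argument on your degenerate curve.
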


Our proof of Theorem \ref{i3} is just a small modification of the proofs in \cite{l2}. We only write the modifications needed to get
nodal irreducible curves.

We work over an algebraically closed field $\mathbb {K}$ such that $\mbox{char}(\mathbb {K})=0$.

\section{The proof}\label{S2}

\begin{lemma}\label{a00}
Fix a set $S\subset \mathbb {P}^m$, $m \ge 2$, such that $\sharp (S) \le m+3$ and $S$ is in linearly general position. Let $\Gamma$ be the set of all rational normal curves of $\mathbb {P}^m$
containing $S$. Then $\Gamma$ is a non-empty and irreducible variety of dimension $(m+3-\sharp (S))(m-1)$.
\end{lemma}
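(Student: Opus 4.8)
The plan is to realise $\Gamma$ as a single fibre of an incidence correspondence sitting over a configuration space, and to exploit the transitive action of $G=\mathrm{PGL}(m+1)$ on point configurations in linearly general position. As a warm-up I record the parameter space of all rational normal curves: the standard rational normal curve $C_0=\nu_m(\mathbb{P}^1)$ is stabilised in $G$ exactly by the image of $\mathrm{PGL}(2)$ acting through $\mathrm{Sym}^m$, and every rational normal curve of $\mathbb{P}^m$ is a $G$-translate of $C_0$. Hence the set $R_m$ of all rational normal curves is the orbit $G\cdot C_0\cong G/\mathrm{PGL}(2)$, a smooth irreducible variety of dimension $(m+1)^2-1-3=(m+3)(m-1)$. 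This already settles the case $\sharp(S)=0$.

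Write $k=\sharp(S)$ and form the incidence variety $I=\{(C,(q_1,\dots,q_k))\;:\;C\in R_m,\ q_i\in C\text{ distinct}\}$. The first projection $I\to R_m$ has, as its fibre over $C$, the space of $k$ distinct ordered points of $C\cong\mathbb{P}^1$, which is irreducible of dimension $k$; therefore $I$ is irreducible of dimension $(m+3)(m-1)+k$. Since any distinct points of a rational normal curve are automatically in linearly general position, the second projection $\pi$ lands in the open set $U\subset(\mathbb{P}^m)^k$ of $k$-tuples in linearly general position, and for a labelling $\vec p$ of $S$ the fibre $\pi^{-1}(\vec p)$ is canonically identified with $\Gamma$.

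Now suppose $k\le m+2$. Then $G$ acts transitively on $U$ (it is transitive on ordered frames, and on shorter tuples of independent points), so $U=G/H$ with $H=\mathrm{Stab}_G(\vec p)$ and $\pi$ is $G$-equivariant. The image of $\pi$ is a non-empty $G$-stable subset of the single orbit $U$, hence equals $U$; in particular $\Gamma\neq\emptyset$ and $\dim\Gamma=\dim I-\dim U=(m+3)(m-1)+k-km=(m+3-k)(m-1)$, as required. For irreducibility I use that a $G$-equivariant morphism to a homogeneous space is an associated bundle, $I\cong G\times_H\Gamma$, so that the quotient map $G\times\Gamma\to I$ is a principal $H$-bundle. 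The stabiliser $H$ of $\le m+2$ points in linearly general position is connected (for $\le m+1$ independent points it is, in suitable coordinates, a connected group assembled from copies of $\mathbb{G}_m$, an affine space and one factor $\mathrm{GL}$; for a projective frame of $m+2$ points it is trivial), so the fibres of $G\times\Gamma\to I$ are geometrically irreducible. By the standard fact that a faithfully flat morphism of finite type, surjective onto an irreducible base and with geometrically irreducible fibres, has irreducible total space, the space $G\times\Gamma$ is irreducible, whence $\Gamma$ is irreducible.

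The boundary case $k=m+3$ is exactly the classical statement that $m+3$ points in linearly general position lie on a \emph{unique} rational normal curve; thus $\Gamma$ is a single reduced point, irreducible of dimension $0=(m+3-(m+3))(m-1)$, in agreement with the formula. (This can be recovered inductively by projecting from one point of $S$, which lowers $m$ and $k$ simultaneously, but I would simply quote it.) The dimension bookkeeping above is routine; the genuine obstacle is the irreducibility of the single fibre $\Gamma$, since irreducibility of the total space $I$ does not by itself descend to a fibre, monodromy being free in principle to permute several components of $\Gamma$. What forbids this is precisely the transitivity of the $G$-action together with the connectedness of the point-stabiliser $H$: this is the one step where the hypothesis $\sharp(S)\le m+2$ enters essentially, the remaining case $\sharp(S)=m+3$ being governed by classical uniqueness.
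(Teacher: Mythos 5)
Your proof is correct, but it takes a genuinely different route from the paper's. Both arguments quote the classical fact that $m+3$ points of $\mathbb{P}^m$ in linearly general position lie on a unique rational normal curve, but they use it in opposite ways. The paper makes that fact the engine of the whole proof: for $\sharp (S)\le m+2$ it chooses $m+3-\sharp (S)$ additional general points $A$ and sends $A$ to the unique rational normal curve through $S\cup A$; this defines a dominant morphism from (an open subset of) the irreducible variety $(\mathbb{P}^m)^{m+3-\sharp (S)}$ onto $\Gamma$, so nonemptiness and irreducibility are immediate, and the dimension drops out because the fibre over a fixed $D\in \Gamma$ is the set of tuples $A\subset D$, of dimension $m+3-\sharp (S)$. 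You, by contrast, invoke the classical theorem only in the boundary case $\sharp (S)=m+3$, and for $\sharp (S)\le m+2$ you argue by pure homogeneity: the incidence correspondence $I$ over the configuration space $U$, transitivity of $\mathrm{PGL}(m+1)$ on $U$, the fibre-dimension count, and --- the step you rightly identify as the crux --- irreducibility of the single fibre $\Gamma$ via connectedness of the point-stabiliser $H$ together with descent of irreducibility along the faithfully flat map $G\times \Gamma \to I$ (note that the irreducibility of $I$ itself, which you assert from the fibration over $R_m$, rests on the same openness-plus-irreducible-fibres fact, or on writing $I$ as the image of $G\times F$ with $F$ the configuration space of $C_0$). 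The comparison is instructive: the paper's parametrization trick exhibits $\Gamma$ directly as the image of an irreducible variety, so the monodromy problem you must confront never arises, which is why its proof is four lines; your argument is longer and technologically heavier (torsors, connectedness of stabilisers, flat descent of irreducibility), but it is independent of the classical uniqueness theorem exactly in the range $\sharp (S)\le m+2$ --- the only range the paper actually uses, since in the application $m=r$ and $\sharp (S)=r+2$ --- and it displays $\Gamma$ as a fibre of a $G$-equivariant map over a homogeneous space, giving for free that all the varieties $\Gamma$ for sets $S$ of the same cardinality are isomorphic.
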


\begin{proof}
If $\sharp (S)=m+3$, then the result is classical. Now assume $\sharp (S)\le m+2$. Fix a general $A\subset \mathbb {P}^r$ such that $\sharp (A)=s+3-\sharp (S)$. Since $\sharp (S\cup A) =m+3$,
we may apply the case just done to the set $S\cup A$. We get the nonemptiness and the irreducibility of $\Gamma$ and its dimension, because
for a fixed rational normal curve $D\subset \mathbb {P}^m$ such that $D\supset S$ the set of all $A$'s contained in $D$ has dimension $s+3-\sharp (S)$.
\end{proof}

\begin{lemma}\label{u1}
Let $Y = A\cup B \subset \mathbb {P}^r$ be a nodal curve such that $h^1(A,\mathcal {O}_A(2))=0$. Set $S:= A\cap B$ and see it as an effective Cartier divisor
of $B$. Assume $h^1(B,\mathcal {O}_B(2)(-S)) =0$. Then $h^1(Y,\mathcal {O}_Y(2))=0$.
\end{lemma}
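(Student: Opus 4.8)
The plan is to reduce the vanishing on $Y$ to the two hypothesized vanishings on $A$ and $B$ by means of the Mayer--Vietoris type exact sequence attached to the decomposition $Y = A\cup B$. The crucial observation is that the hypotheses are asymmetric: we are given $h^1(A,\mathcal {O}_A(2))=0$ with \emph{no} twist by $-S$, but $h^1(B,\mathcal {O}_B(2)(-S))=0$ \emph{with} the twist. This dictates which subcurve I restrict to. I would restrict $\mathcal {O}_Y$ to $A$, so that $\mathcal {O}_A$ (untwisted) appears as the quotient and the ideal sheaf $\mathcal {I}_{A,Y}$ appears as the kernel.

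First I would identify the ideal sheaf $\mathcal {I}_{A,Y}$ of $A$ in $Y$. Since $Y$ is nodal and $A\cap B = S$ consists of nodes of $Y$, a local computation at each node $p\in S$ (where $\mathcal {O}_{Y,p}\cong \mathbb {K}[[x,y]]/(xy)$, with $A=\{y=0\}$ and $B=\{x=0\}$) shows that the ideal generated by $y$ is annihilated exactly by $(x)$, hence $\mathcal {I}_{A,Y}\cong \mathcal {O}_Y/(x)=\mathcal {O}_B$ as a module, but with generator $y$ vanishing along $S$. Globally this gives the identification $\mathcal {I}_{A,Y}\cong \mathcal {O}_B(-S)$, and therefore the short exact sequence of $\mathcal {O}_Y$-modules
\[
0 \to \mathcal {O}_B(-S) \to \mathcal {O}_Y \to \mathcal {O}_A \to 0 .
\]

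Next I would twist this sequence by the line bundle $\mathcal {O}_Y(2)=\mathcal {O}_{\mathbb {P}^r}(2)|_Y$. Since $\mathcal {O}_Y(2)$ is locally free of rank one, tensoring preserves exactness, and because $\mathcal {O}_B(-S)$ is supported on $B$ the kernel becomes $\mathcal {O}_B(-S)\otimes \mathcal {O}_Y(2)|_B = \mathcal {O}_B(2)(-S)$. This yields
\[
0 \to \mathcal {O}_B(2)(-S) \to \mathcal {O}_Y(2) \to \mathcal {O}_A(2) \to 0 .
\]
Passing to the long exact cohomology sequence, the relevant segment $H^1(B,\mathcal {O}_B(2)(-S)) \to H^1(Y,\mathcal {O}_Y(2)) \to H^1(A,\mathcal {O}_A(2))$ has both outer terms zero by hypothesis, forcing $h^1(Y,\mathcal {O}_Y(2))=0$.

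The proof is short, so the only genuine point requiring care—the ``main obstacle''—is the correct identification $\mathcal {I}_{A,Y}\cong \mathcal {O}_B(-S)$ and the verification that it is $A$ (not $B$) one should restrict to, so that the two hypotheses line up exactly with the kernel and quotient. Once that bookkeeping is done the cohomology argument is immediate; the nodality of $Y$ is used precisely to guarantee the clean local structure of the ideal sheaf at the points of $S$.
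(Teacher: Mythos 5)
Your proof is correct, but it runs through a different exact sequence than the paper's. The paper uses the Mayer--Vietoris sequence
\[
0 \to \mathcal {O}_Y(2) \to \mathcal {O}_A(2)\oplus \mathcal {O}_B(2)\to \mathcal {O}_S(2) \to 0,
\]
in which the twisted sheaf $\mathcal {O}_B(2)(-S)$ never appears; accordingly, the paper first unpacks the hypothesis $h^1(B,\mathcal {O}_B(2)(-S))=0$, via the restriction sequence $0 \to \mathcal {O}_B(2)(-S) \to \mathcal {O}_B(2) \to \mathcal {O}_S(2) \to 0$ on $B$, into the two facts it actually feeds into Mayer--Vietoris: $h^1(B,\mathcal {O}_B(2))=0$ and the surjectivity of $H^0(B,\mathcal {O}_B(2)) \to H^0(S,\mathcal {O}_S(2))$, which together kill the terms flanking $H^1(Y,\mathcal {O}_Y(2))$ in the long exact sequence. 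You instead use the decomposition (ideal-sheaf) sequence
\[
0 \to \mathcal {O}_B(2)(-S) \to \mathcal {O}_Y(2) \to \mathcal {O}_A(2) \to 0,
\]
which consumes the two hypotheses literally as stated and gives the vanishing in a single application of the long exact sequence. The two arguments are close relatives --- your sequence, spliced with the restriction sequence on $B$, reproduces Mayer--Vietoris by the snake lemma --- but your bookkeeping is the more economical one here, precisely because the asymmetry of the hypotheses matches the asymmetry of the sequence you chose. Your identification $\mathcal {I}_{A,Y}\cong \mathcal {O}_B(-S)$ is the standard one for nodal intersections (a function on $Y$ vanishing on $A$ is the same as a function on $B$ vanishing on $S$, extended by zero), and your local computation at the nodes is where nodality enters your argument, just as in the paper it enters through the exactness of Mayer--Vietoris with the reduced structure on $S$.
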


\begin{proof}
Since $h^1(B,\mathcal {O}_Y(2)(-S)) =0$, we have $h^1(B,\mathcal {O}_B(2)) =0$ and the restriction map $H^0(B,\mathcal {O}_B(2)) \to H^0(S,\mathcal {O}_S(2))$
is surjective. Hence the lemma follows from the Mayer-Vietoris exact sequence
$$0 \to \mathcal {O}_Y(2) \to \mathcal {O}_A(2)\oplus \mathcal {O}_B(2)\to \mathcal {O}_S(2) \to 0$$
\end{proof}

\vspace{0.3cm}

\qquad {\emph {Proof of Theorem \ref{i3}.}} Until the last step we will only get a component of $E(r,d,g,x)$. In the statement of \cite{l2}, Theorem 1.2, there is  function which goes as $2rg/(r+3)$ if $r\notin \{4,6\}$, like $(30/19)g$ if $r=4$ and
like $(3/2)g$ if $r=6$. For simplicity we used the weaker upper bound $2rg/(r+3)$ even for $r\in \{4,6\}$. Fix
any integer $x$ such that $0\le x \le t$. By \cite{l2}, Theorem 1.2, we may assume $x\ne 0$. Fix any
pair $(u,v)$ such that the proof of \cite{l2}, Theorem 1.2, gives the existence of an irreducible component
$A_{u,v,r}$ of $\mbox{Hilb}(\mathbb {P}^r)$ with the expected number of moduli
and whose general element $Y$ is a smooth curve of degree $u$ and genus $v$ with $h^1(Y,N_Y)=0$. From
the range of degrees and genera covered in the proof of \cite{l2}, Theorem 1.2, it
follows that to prove Theorem \ref{i3} it is sufficient to find (for all $(u,v)$!)
an irreducible component $B$ of $\mbox{Hilb}(\mathbb {P}^r)$ and a maximal subfamily $B_x\subset B$ of integral nodal curves with exactly
$x$ nodes and with the following properties:
\begin{itemize}
\item[(i)] B has the expected dimension and the expected number of moduli
and a general $Y\in B$ is a smooth curve of degree $u+xr$ and genus $v+x(r+1)$ with $h^1(Y,N_Y)=0$;
\item[(ii)] $B_x \subset B$ is a maximal subfamily of integral nodal curves with exactly
$x$ nodes of $B$, i.e. $B_x\subset B$ and $B_x$ is an open subset of an irreducible component of
$E'(r,u+rx,v+(r+1)x,x)$;
\item[(iii)] $B_x$ has the expected number of moduli, i.e. $u_{r,u+rx,v+(r+1)x,x}\vert B_x$ is generically
finite.
\end{itemize}
Fix $x(r+2)$ general points of $Y$ and divide them into $x$ subsets $A_1,\dots ,A_x$ with $\sharp (A_i)=r+2$
for all $i$. Let $D_i\subset \mathbb {P}^r$ be a general rational normal curve containing $A_i$. For
general $A_1,\dots ,A_x$ and general $D_1,\dots ,D_x$ we get $D_i\cap D_j=\emptyset$ for all $i\ne j$,
$D_i\cap Y = A_i$ for all $i$ and that the curve $W:= Y\cup D_1\cup \cdots \cup D_x$ is nodal. Notice that
$W$ is connected, $\deg (W)=u+rx$ and $p_a(W)=v+(r+1)x$. By \cite{be4}, Lemma 2.3 (which uses \cite{s1} and \cite{hh}, Theorem 4.1 and Remark 4.1.1), the curve $W$ is smoothable and $h^1(W,N_W)=0$.
To prove Theorem \ref{i3} we may also restrict the previous proof to pairs $(u,v)$ with the additional condition $\rho (v,r,u) \le 0$. In this
range the proof of \cite{l2}, Theorem 1.2, gives $h^0(Y,T\mathbb {P}^r\vert Y) = (r+1)^2-1$. As in \cite{s1} (use of the multiplication map
$\mu _0(D)$) or \cite{l2}, property ($\gamma$) at page 3489, to get that the unique irreducible
component $\Gamma$ of $\mbox{Hilb}(\mathbb {P}^r)$ containing $Y\cup D_1$ has the right number of moduli and that $h^0(X,T\mathbb {P}^r\vert X)
= (r+1)^2-1$ for a general element $X$ of it, it is sufficient to prove $h^0(D_1,(T\mathbb {P}^r\vert D_1)(-A_1))=0$. This vanishing is true, because
the vector bundle $T\mathbb {P}^r\vert D_1$ is a direct sum of $r$ line bundles of degree $r+1$ (\cite{v}, \cite{ra}, \cite{r0}). Iterating $x-1$ times the proof
we get that the irreducible component of $\mbox{Hilb}(\mathbb {P}^r)$ containing $W$ has the right number of moduli. Varying the curve $Y$ in $B$, the sets $A_i$ and the rational normal curves $D_i$
we get an irreducible family $\mathcal {W}$ of nodal curves of degree $u+rx$, arithmetic genus $v+x(r+1)$, $x+1$ irreducible components and with exactly $x(r+2)$ nodes. Since
$\dim (B) = (r+1)u-(r-3)(v-1)$, we have $\dim (\mathcal {W}) = (r+1)u -(r-3)(v-1) +x(r+2) +x(r-1)$ (use the case $m=r$ of Lemma \ref{a00}).  Since $\dim (\Gamma )= (r+1)(u+rx) -(r-3)(v+(r+1)x)) =(r+1)u-(r-3)(v-1) +(3r+3)x$,
$\mathcal {W}$ has codimension $x(r+2)$ in $\Gamma$. Fix any integer $y$ such that $0\le y \le x(r+2)$ and $S\subseteq A_1\cup \cdots \cup A_x$ such
that $\sharp (S)=y$. By \cite{s2}, Theorem 6.3, there is a neighborhood $U$ of $W$ in $\mbox{Hilb}(\mathbb {P}^r)$ and a non-empty locally closed subset $U_S$ of $U$ consisting of curves
with exactly $y$ nodes, each of them being a deformation of a different point of $S$, and $\dim (U_S)=\dim (U)-y$. Taking $y=x$ and $\sharp (S\cap A_i)=1$ for all $i$ we get
that any $E\in U_S$ is irreducible. To prove that $U_S$ has the expected number of moduli, it is sufficient to prove that $u_{g,r,d}\vert U_S$ is generically finite. Since $W\in \overline{U_S}$,
it is sufficient to prove that $\mathcal {O}_W(1)$ is an isolated $g^r_{\deg (W)}$ on $W$. Take a general $g^r_{\deg (W)}$, $L$, of an irreducible component $\Lambda$ of the set of all $g^r_{\deg (W)}$'s on $W$ such that $\mathcal {O}_W(1)
\in \Lambda$.
Since $\mathcal {O}_W(1)$ is very ample and $h^0(W,\mathcal {O}_W(1)) =r+1$, $L$ is very ample and $h^0(W,L)=r+1$. Call $h_L: W \hookrightarrow \mathbb {P}^r$ the embedding induced by the complete linear system $\vert L\vert$.
Since $B$ has the expected number of moduli, $Y$ is general in $B$ and $\rho (v,r,u) \le 0$, $\mathcal {O}_Y(1)$
is an isolated $g^r_u$ on $Y$.  Since $L\vert Y$ is near to $\mathcal {O}_Y(1)$, we have $L\vert Y \cong \mathcal {O}_Y(1)$. Since we may assume that $A_1\cup \cdots \cup A_x$
is not sent into itself by a non-trivial automorphism of $Y$, we also see that $h_L(W)$ is the union of $Y$ and $x$ rational normal curves $C_1,\dots ,C_x$ with $C_i\cap Y =A_i$ for all $i$
and $C_i\cap C_j = \emptyset$ for all $i\ne j$. Since $\sharp (A_i)\ge 4$, only finitely many automorphisms of $D_i$ send $A_i$ into itself.
Hence we only have finitely many possible curves $C_i$, $i=1,\dots ,x$. Hence $L = \mathcal {O}_W(1)$.

To check that the component we got is a component of $E'(r,d,g,x)$ we need to check that at each step here and in \cite{l2} we may apply Lemma \ref{u1}. For the curves $D_i$ it is easy. In \cite{l2}, page 3490, the author quoted
\cite{l1}, Sublemma 3.5; here $B$ is a smooth rational curve, $\deg (B)=r-1$ and $\sharp (S) =r+2$. In \cite{l2}, Claim (3.4) at page 3490, twice it is quoted \cite{l1}, Claim 3.7; here $B$ is a smooth elliptic curve,
$\deg (B)=r+1$. In \cite{l1}, Proposition 2.1, we have as $B$ a smooth curve of degree $d\ge p_a(B)+r$ with $\sharp (S) =r+4$; here $\deg (\mathcal {O}_B(2)(-S)) \ge 2\cdot p_a(B) +2r-r-4 > 2\cdot p_a(B)-2$. In \cite{l2} the reader will often meet
as $B$ a canonically embedded curve (hence $B$ has genus $r+1$ and $\mathcal {O}_B(1) \cong \omega _B$) with $\sharp (S) =r+6+ \epsilon $
with $\epsilon = 0$ if $r\notin \{4,6\}$, $\epsilon = 2$ if $r=4$ and $\epsilon =1$ if $r=6$; here we need $r\ge 7$ to use Lemma \ref{u1}. \qed

\begin{remark}\label{b1}
Fix $r\in \{3,4,6\}$. Using the cases $r\in \{4,6\}$ of \cite{l2}, Theorem 1.2,  or \cite{p} (case $r=3$) we may take
$3g$ (case $r=3$) or $(30/19)g$ (case  $r=4$) and $(3/2)g$ (case $r=6$) instead of $2rg/(r+3)$ in the statement of Theorem \ref{i3}.
\end{remark}

\providecommand{\bysame}{\leavevmode\hbox to3em{\hrulefill}\thinspace}

\end{document}